\def\square{$\vcenter{\hrule\hbox{\vrule height 2truemm \kern 2truemm
\vrule }\hrule}$}
\newenvironment{proof}
{\noindent {{\bf Proof}:}}{\hspace*{\fill}\square\vskip 8pt}
\newcommand{\C}{{\bf C}}
\newcommand{\R}{{\bf R}}
\newcommand{\Z}{{\bf Z}}
\def\frac#1#2{{\textstyle{{#1} \overwithdelims.. {#2}}}}
\newtheorem{prop}{Proposition}[section]
\newtheorem{thm}[prop]{Theorem}
\newtheorem{lem}[prop]{Lemma}
\newtheorem{defi}[prop]{Definition}
\newtheorem{rem}[prop]{Remark}
\newtheorem{rems}[prop]{Remarks}
\begin{document}
\title{\bf On the curvature of the Real Amoeba \footnote{ AMS
Classification 14H20, 14h50, 14P59, . Keywords : Real Plane Algebraic
Curves, Curvature, }}
\author{Mikael Passare\\
Department of Mathematics\\
Stockholm University\\
SE-106 91 Stockholm\\
passare@math.su.se
   \and 
           Jean-Jacques Risler\\
           IMJ, UPMC\\
4, place Jussieu, case 247\\
75252 Paris Cedex 05\\
risler@math.jussieu.fr
} 

\date{\today}
\maketitle

\begin{abstract}
For a real smooth algebraic curve $A \subset (\C^*)^2$, the amoeba ${\cal A} \subset \R^2$ is the image of $A$ under the map Log : $(x,y) \mapsto (\log |x|, \log | y |)$. We describe an universal bound for the total curvature of  the real amoeba ${\cal A}_{\R A}$ and we prove that this bound is reached if and only if the curve $A$ is a simple Harnack curve in the sense of Mikhalkin.
\end{abstract}

 
\section{Introduction}

Let $A \subset (\C^*)^2$ be a smooth real algebraic curve with a non-degenerated Newton polygon $\Delta$ defined by an equation $f = 0$, where $f : \C^2 \rightarrow \C$ is a reduced polynomial with real coefficients.\\
$\R A \subset (\R^*)^2$ stands for the real part of $A$.\\
Let $Log$ be the map : $(\C^*)^2 \rightarrow \R^2$, $(z_1, z_2) \mapsto (\log |z_1|, \log |z_2|)$, $L = Log_{| A}$ the restriction of Log to the curve $A$, ${\cal A}_A = L(A), \; {\cal A}_{\R A}= L(\R A), \; F \subset A$  the critical set of $L $.\\
The set ${\cal A}_A $ is the {\it Amoeba} of $A$, and ${\cal A}_{\R A}$ its {\it Real Amoeba} (see \cite{GKZ}). Notice that ${\cal A}_{\R A}$ is a real curve since for each quadrant $Q \subset (\R^*)^2$, $L_{|  Q}$ is a diffeomorphism.\\
G. Mikhalkin (\cite{M}) defined  ``Simple Harnack Curves'' ( see below), and used the notion of Amoeba to prove the uniqueness of the topological type of the pair $(\bar A, T_{\Delta})$ when $A$ is a simple Harnack curve and $\bar A$ its closure in the toric surface $T_{\Delta}$.\\
In the paper \cite{P-R},  it is proved that ${\rm Area}({\cal A}) \leq {\pi}^2 {\rm Area}(\Delta)$ and in \cite{M-R} the auhors prove $A$ is a simple Harnack curve if and only if there is equality above, i.e., if and only if the Area of ${\cal A}$ is maximal.\\
In this paper, we prove a similar result, but for the total curvature of the real Amoeba (instead of the volume of the Amoeba) ; notice that for a hypersurface (in higher dimension), the total curvature of the Real Amoeba is always finite, but not the volume of the Amoeba which is not bounded in general. We then hope to generalize the results of this paper in higher dimension (at least for surfaces).

\section{Amoebas and Curvature}

Let $T_{\Delta}$ be the toric surface associated to $\Delta$, $\tilde l_i \; (1 \leq i \leq n)$ the sides of $\Delta$, $d_i$ the integer length of $\tilde l_i$, $\bar A$  (resp. $ \R \bar A)$ the closure of the image of $A$ in $T_{\Delta}$ 
(resp. in $\R T_{\Delta}$). \\
We denote by $ l_i$ the component of the divisor $T_{\Delta}^{\infty} : = T_{\Delta} \setminus ((\C^*)^2)$ corresponding to the side $\tilde l_i \in \partial \Delta$. We assume that $\bar A$ is smooth and transverse to $T_{\Delta}^{\infty} $.\\

Set $\gamma$ for the logarithmic Gauss map : $A \rightarrow \C P^1$ (see \cite{M}); the map $\gamma$ is defined by $\gamma (z_1,z_2) = [ z_1 \partial f/\partial z_1, z_2 \partial f/ \partial z_2 ].$\\
The real Logarithmic Gauss map $\gamma_{\R} : \R A \rightarrow  \R P^1$ is the restriction of $\gamma$ to $\R A$.\\
We have $F = \gamma^{-1}(\R P^1)$ (\cite{M}, Lemma 3), and the following commutative diagram :

\[
 \xymatrix{ \R A \ar[d]_{L} \ar[dr]^{\gamma_{\R}}\\
 {\cal A}_{\R A}  \ar[r]_g & \R P^1 } \]

\noindent where $g$ is the usual Gauss map, defined on the smooth part of ${\cal A}_{\R A}$.\\
The logarithmic Gauss map $\gamma : A \rightarrow \C P^1$ extends to a map $\bar \gamma : \bar A \rightarrow \C P^1$ of degree $2 vol(\Delta)$ (\cite{M}), therefore
the map $\gamma_{\R}$  has finite fibers of cardinal $\leq 2 vol(\Delta)$ which implies that the fibers of $g$ are also of cardinal $\leq 2 vol(\Delta)$. \\

If $k$ denotes the curvature function on the curve ${\cal A}_{\R A}$ (for any orientation), we have 
\begin{equation} \label{1}
  \int_{{\cal A}_{\R A}} \vert k \vert  \leq  2 \pi vol (\Delta) 
\end{equation}
(see \cite{R} or \cite{L}), since vol(${\R} P^1) = \pi$ and because  $ \int_{{\cal A}_{\R A}} \vert k \vert$ is the volume of ${\rm im}(g) \subset \R P^1$ (counted with multiplicities, the multiplicity of  $x \in \R P^1$ being the cardinal of the fiber $g^{-1}(x)$ which is $\leq 2 vol(\Delta)$).        

\begin{defi} \label{Mcurv}

We say that the real amoeba ${\cal A}_{\R A}$ has maximal curvature if there is equality in (\ref{1}).

\end{defi}

\begin{lem} \label{maxcurv}
Let $A \subset (\C^*)^2$ be a real smooth algebraic curve. Then the following conditions are equivalent:
\begin{enumerate}
\item ${\cal A}_{\R A}$ has maximal curvature
\item The logarithmic Gauss map $\gamma : A \rightarrow \C P^1$ is totally real (i.e., $\gamma^{-1}(x) \subset \R A$ for $x \in \R P^1$).
\end{enumerate}

\end{lem}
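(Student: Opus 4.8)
The plan is to convert the curvature integral into a fibre-counting integral over $\R P^1$ and then run an equality analysis on the very chain of inequalities that already underlies (\ref{1}). First I would record, via the Banach indicatrix (coarea) formula applied to the Gauss map together with the commutative diagram $\gamma_{\R} = g \circ L$, that
\begin{equation*}
\int_{{\cal A}_{\R A}} \vert k \vert = \int_{\R P^1} \#\,\gamma_{\R}^{-1}(x)\, dx ,
\end{equation*}
where each branch of ${\cal A}_{\R A}$ is counted separately, so that self-intersections of the amoeba (where $g$ is multivalued) contribute once per branch. Here I use that $L$ is a diffeomorphism on each quadrant, hence the tangent direction of ${\cal A}_{\R A}$ at $L(p)$ is exactly $\gamma_{\R}(p)$, and the total curvature is the total variation of $\gamma_{\R}$ along $\R A$.

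Next I would analyse the generic fibre. For $x$ outside the finite branch locus of $\bar\gamma$ and outside the image of $\bar A \cap T_{\Delta}^{\infty}$, the fibre $\bar\gamma^{-1}(x)$ consists of exactly $2 vol(\Delta)$ points lying in $(\C^*)^2$. Since $f$ has real coefficients, complex conjugation acts on $A$ and intertwines $\gamma$ with conjugation on $\C P^1$ (one checks $\gamma(\bar z_1,\bar z_2) = \overline{\gamma(z_1,z_2)}$ directly from the formula for $\gamma$), and conjugation fixes $\R P^1$ pointwise; hence for $x \in \R P^1$ this fibre splits into the real points, i.e. $\gamma_{\R}^{-1}(x) = \gamma^{-1}(x)\cap \R A$, together with some number $c(x) \geq 0$ of complex-conjugate pairs. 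Thus
\begin{equation*}
\#\,\gamma_{\R}^{-1}(x) = 2 vol(\Delta) - 2\,c(x) \leq 2 vol(\Delta)
\end{equation*}
for generic $x$, which both reproves (\ref{1}) and isolates the defect $c(x)$.

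Combining these, maximal curvature means $\int_{\R P^1} \#\gamma_{\R}^{-1}(x)\,dx = 2\pi\, vol(\Delta) = 2 vol(\Delta)\cdot vol(\R P^1)$; since the integrand is bounded above by $2 vol(\Delta)$ on a set of measure $\pi$, equality forces $\#\gamma_{\R}^{-1}(x) = 2 vol(\Delta)$, i.e. $c(x)=0$, for almost every $x$, and conversely $c \equiv 0$ gives equality. This already yields the equivalence up to a null set of values $x$. The remaining, and I expect main, obstacle is to upgrade ``almost every $x$'' to condition (2) as stated, namely $\gamma^{-1}(x)\subset \R A$ for \emph{every} $x\in\R P^1$: I would argue by an openness/continuity argument, noting that a genuine non-real point $p\in\gamma^{-1}(x_0)$ comes with a distinct conjugate $\bar p$, and if $p$ is unramified in its fibre it persists as a non-real pair for all $x$ near $x_0$, producing a positive-measure set of fibres with $c(x)>0$ and contradicting $c=0$ a.e.; the finitely many ramified and at-infinity values are then absorbed because ``totally real'' is the closed condition $F=\gamma^{-1}(\R P^1)\subset \R A$ and $\R A$ is closed in $A$. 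The one other point needing care is the identification in the first step between counting fibres of $g$ and of $\gamma_{\R}$, i.e. confirming that branches are the correct objects to count at self-intersections of the amoeba.
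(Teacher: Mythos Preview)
Your proposal is correct and follows the same approach as the paper's own (very brief) proof: both identify the curvature integral with the fibre count of $\gamma_{\R}$ over $\R P^1$ and obtain the equivalence by comparing this count to the degree $2\,vol(\Delta)$. You are in fact more thorough than the paper, which simply asserts without justification that failure of total reality yields an \emph{open} set $U\subset\R P^1$ with deficient real fibres; your conjugation description of the fibres and the openness/persistence argument (which, incidentally, works uniformly via the open mapping theorem, so the ramified/unramified split is not strictly needed) supply exactly what the paper leaves implicit.
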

\begin{proof}
$1. \Rightarrow 2.$  If $\gamma$ is not  totally real, there would exist an open set $U \subset \R P^1$ such that ${\rm card}(\gamma^{-1}_{\R}(x)) < 2 vol (\Delta)$  $\forall x \in U$; then we would have also ${\rm card}(g^{-1}(x)) < 2 vol(\Delta) \; \forall x \in U$, which implies that ${\rm  vol}({\rm im}(g)) < 2 \pi vol(\Delta)$.\\
$2. \Rightarrow 1.$ Since the map $\gamma$ is totally real, we have that the fibers of $g$ are generically of cardinal $2 vol(\Delta)$, which implies that  $ \int_{{\cal A}_{\R A}} \vert k \vert ={\rm  vol}({\rm im}(g)) = 2 \pi {\rm vol}(\Delta)$, i.e.,  the real amoeba ${\cal A}_{\R A}$ has maximal curvature.
\end{proof}
\begin{rems} \label{maxtotcurv}
\end{rems}

 Notice that if ${\cal A}_{\R A}$ has maximal curvature, then :
\begin{itemize}
\item
For $x \in \R P^1,$ $\gamma^{- 1}(x) \subset \R A$, which implies $F \subset \R A$ since $F = \gamma^{- 1}(\R P^1)$ (\cite{M}) and therefore $F = \R A$. 


\item  The only possibilities for ${\cal A}_{\R A}$ to have inflection points are at ``pinching points'' (cf. \cite{M}), which are also the only possible singular points of ${\cal A}_{\R A}$ (we will see that in fact if  ${\cal A}_{\R A}$ has maximal curvature, then ${\cal A}_{\R A}$ is smooth).

\end{itemize}


\section{Simple Harnack curves}

\begin{defi}  1) A real Algebraic smooth curve $A \subset (\C^*)^2$ with Newton polygon $\Delta$ is said {\rm maximal} (or a {\rm M-curve}) if the number of connected components of $\R \bar A$ is $g + 1$, where $g$ is the genus of $\bar  A$ (or the number of integer points in  the interior of $\Delta$). \\
2) $A$ is said "In weak maximal position" if $\bar A$
is smooth and transverse to $T_{\Delta}^{\infty} $ and if  for $1 \leq  i \leq n$, $\bar A \cap \R l_i = \bar A \cap  l_i$  (which means that $\bar A \cap T_{\Delta}^{\infty} $ is totally real). 
\end{defi}.

Let us look at the following properties for a M-curve $A$ in weak maximal position.

\begin{enumerate}

\item There exists a connected component $C \subset  {\R \bar A}$ such that for $1 \leq i \leq n$, $C \cap \R  l_i = \bar A \cap  l_i $ (i.e. $C \cap \R  l_i$ is made of $d_i$ points).

\item Condition 1. is verified and for $1 \leq i \leq n$, $C \cap \R  l_i$ is contained in an arc $C_i$ of $C$, such that $C_i  \cap C_j = \emptyset$ for $i \not = j$.


\item Condition 2. is verified and there is an orientation of $\partial \Delta$ such that $C$ is cyclicly in maximal position (see \cite{M}).

\end{enumerate}

Notice that Condition 3. is the definition of a "Simple Harnack curve" in the sense of \cite{M}.\\
\begin{rems}
\end{rems}
 \begin{enumerate}
\item  It is proved in \cite{M} that a simple Harnack curve verifies also the following property:\\
 For $1 \leq i \leq n$ the order of the points $C_i \cap \R l_i$ is the same on $C_i$ and $\R  l_i$ (for some orientations).

\item An alternative to conditions 2. and  3. is given by the following lemma:
\begin{lem} \label{cond3}
With the above notations, assume that $A$ is a $M$-curve in weak maximal position and that condition {\rm 1.} is  fulfilled. Then  the following conditions are equivalent:
\begin{enumerate}
\item The curve $A$ is a simple Harnack curve (i.e., conditions {\rm 2.}  and {\rm 3.} are realised)
\item The amoeba ${\cal A}_{\R A}$ is smooth.
\end{enumerate}
\end{lem}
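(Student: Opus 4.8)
The plan is to translate the smoothness of $\mathcal{A}_{\R A}$ into a purely combinatorial statement about the cyclic order in which the distinguished component $C$ meets the divisors at infinity, and then to recognize that statement as conditions 2 and 3. First I would record the basic local structure. Since for every quadrant $Q \subset (\R^*)^2$ the restriction $L|_Q$ is a diffeomorphism onto its image, each connected arc of $\R A \cap Q$ is embedded by $L$ as a smooth arc of $\mathcal{A}_{\R A}$; consequently a point of $\mathcal{A}_{\R A}$ can fail to be smooth only if it is a pinching point, i.e.\ a point at which the images of two distinct such arcs meet (this is exactly the content of the Remarks \ref{maxtotcurv}). Because $A$ is an $M$-curve in weak maximal position satisfying condition 1, the component $C$ is a circle in $\R T_{\Delta}$ meeting each $\R l_i$ in exactly the $d_i$ points of $\bar A \cap l_i$; deleting these points cuts $C$ into finitely many open arcs, each contained in a single quadrant and each mapped by $L$ to an arc of $\mathcal{A}_{\R A}$ joining two tentacles. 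A tentacle issuing from a point of $C \cap \R l_i$ escapes to infinity asymptotically in the direction of the primitive outer normal $\nu_i$ to the side $\tilde l_i$. Traversing the circle $C$ therefore produces a cyclic word in the symbols $\nu_1, \ldots, \nu_n$, and the outer boundary of $\mathcal{A}_{\R A}$ is the corresponding cyclic concatenation of embedded arcs with prescribed asymptotic directions; the remaining $g$ components of $\R \bar A$ are ovals, each contained in one quadrant and hence mapped by $L$ to an embedded circle bounding a hole of the amoeba.

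Next I would prove the equivalence at the level of this cyclic word. The claim is that the outer-boundary arcs are pairwise disjoint --- so that the outer boundary is an embedded curve --- if and only if $C$ meets the sides in the cyclic order prescribed by $\partial \Delta$ (condition 3), with the $d_i$ points of each side grouped into a single arc $C_i$ (condition 2) and ordered compatibly with $\R l_i$ (the order-preservation property of \cite{M} recalled above). The forward direction is a planarity argument: arcs running off to infinity in directions $\nu_i$ listed in a cyclic order incompatible with the cyclic order of the $\nu_i$, or two arcs into the same direction $\nu_i$ nested in the wrong way, cannot be realised disjointly in $\R^2$, so any transposition violating condition 2 or 3 forces two arcs to cross and produces a genuine node of $\mathcal{A}_{\R A}$. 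The converse is the corresponding existence statement, which for a simple Harnack curve follows from Mikhalkin's description (\cite{M}) of the cyclically maximal position: the tentacles emanate in monotone cyclic order, the outer boundary closes up into an embedded curve, and by maximality of the area ($F = \R A$, equivalently $\gamma$ totally real, cf.\ Lemma \ref{maxcurv}) the $g$ ovals map onto the $g$ distinct bounded complementary holes, so their image circles are disjoint from one another and from the outer boundary.

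Finally I would assemble the two implications. If $A$ is a simple Harnack curve, the previous paragraph shows that the outer boundary is embedded and the hole boundaries are disjoint, hence $\mathcal{A}_{\R A}$ is smooth, giving $(a) \Rightarrow (b)$. Conversely, if $\mathcal{A}_{\R A}$ is smooth then no two boundary arcs cross and no two hole circles touch; by the planarity argument this excludes every cyclic transposition, so conditions 2 and 3 must hold and $A$ is a simple Harnack curve, giving $(b) \Rightarrow (a)$. I expect the delicate point throughout to be the rigorous passage from ``the cyclic order is wrong'' to ``some pair of arcs must intersect'', since one must rule out that a badly ordered configuration is rescued by the arcs wandering through the finite part of the plane; controlling this is a Jordan-curve bookkeeping that uses that each arc is embedded with a fixed asymptotic direction and that the complement of the tentacles is organised by the normal fan of $\Delta$.
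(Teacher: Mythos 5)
Your proposal is correct and follows essentially the same route as the paper: the direction (a) $\Rightarrow$ (b) is handled by appeal to Mikhalkin (the paper simply cites \cite{M}, Corollary 9), and (b) $\Rightarrow$ (a) is exactly your planarity argument that a violation of condition 2 or 3 forces two arcs of ${\cal A}_{\R A}$ to cross. The one delicate point you flag --- rigorously passing from ``wrong cyclic order'' to ``forced intersection'' when arcs may wander through the finite part of $\R^2$ --- is resolved in the paper by using the moment map to view ${\cal A}_{\R A}$ inside the compact polygon $\overset{\circ}{\Delta}$, where an offending arc $\tilde C_1$ joining two sides separates $\Delta$ into two components and the Jordan-type separation argument closes immediately.
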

\begin{proof} \\
(a) $\Rightarrow$ (b) is proved in \cite{M} (Corollary 9)\\
(b) $\Rightarrow$ (a) 
We have to prove that conditions 2. and 3. above are fulfilled if the amoeba  ${\cal A}_{\R A}$ is smooth. \\
Assume by absurd that 2. is not fulfilled for a component $l_i$ of $T_{\Delta}^{\infty}$. Using the moment map, we can see  (topologically) the amoeba ${\cal A}_{\R A}$ as a subset of 
$\overset{\circ}{\Delta} $. If $C \subset \R A$, set $\tilde C$ for its image in $\overset{\circ}{\Delta} $.\\ If 2. is not fulfilled
, there would exist an arc $\tilde C_1$ of $\tilde  C$ joining the side $\tilde  l_i$ of $\partial \Delta$ to another one $\tilde  l_j$ and such that if $\overline{\tilde C_1} \cap \tilde  l_i = \{ P \}$, there are points of $\tilde  C \cap \tilde  l_i$ on both sides of $P$.\\
Therefore $\Delta \setminus \tilde  C_1$ has two connected components, and there must exist an arc $\tilde  C' \subset \tilde  C$ joining theses two components hence intersecting $\tilde  C_1$, which implies that the amoeba ${\cal A}_{\R A}$ is not smooth.\\
The proof of condition 3. is similar.
\end{proof}

\end{enumerate}

\begin{prop} If $A$ is a Simple Harnack curve, then ${\cal A}_{\R A}$ has maximal curvature (Definition \ref{Mcurv}).
\end{prop}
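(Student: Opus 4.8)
The plan is to obtain maximal curvature from Lemma \ref{maxcurv}, so it suffices to prove that the logarithmic Gauss map $\gamma$ is totally real when $A$ is a simple Harnack curve. First I would reformulate total reality in terms of the critical set $F$. Since $f$ has real coefficients, at a real point $(x_1,x_2)\in \R A$ the quantities $x_1\,\partial f/\partial z_1$ and $x_2\,\partial f/\partial z_2$ are real, so $\gamma(\R A)\subset \R P^1$; hence $\R A\subset \gamma^{-1}(\R P^1)=F$ (using \cite{M}, Lemma 3). Consequently $\gamma$ is totally real, i.e. $\gamma^{-1}(\R P^1)\subset \R A$, if and only if $F=\R A$. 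Thus the whole statement reduces to showing that for a simple Harnack curve the critical locus of $Log$ on $A$ coincides with its real locus.

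Next I would exploit that a simple Harnack curve is in particular an $M$-curve, hence a dividing (type I) curve: $A\setminus \R A$ splits into two connected conjugate halves $A^+$ and $A^-$ with common boundary $\R \bar A$. Because $F=\gamma^{-1}(\R P^1)$ and $\R A\subset F$, establishing $F=\R A$ is equivalent to showing $F\cap A^+=\emptyset$, i.e. that $\gamma(A^+)$ meets $\R P^1$ in no point. As $A^+$ is connected, $\gamma(A^+)$ avoids the equator $\R P^1$ exactly when it lies in one of the two open hemispheres of $\C P^1\setminus \R P^1$, so the task becomes: prove that $\gamma$ maps the half $A^+$ into a single hemisphere (and then $A^-$ into the conjugate one).

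The main obstacle is this last step, where the specific geometry of simple Harnack curves enters. I would control the proper map $\gamma\colon \overline{A^+}\to \C P^1$ through its boundary restriction $\gamma_{\R}\colon \R \bar A\to \R P^1$: the cyclically maximal position (condition 3, together with the order-preserving property recalled in Remark 1) forces $\gamma_{\R}$ to be monotone along the boundary arcs $C_i$ and to realise the maximal total degree $vol(\Delta)$ onto $\R P^1$. A degree count then shows that $\overline{A^+}$ is sent by $\gamma$ onto a single closed hemisphere, so that no interior point of $A^+$ maps to $\R P^1$. This gives $F=\R A$, hence total reality, hence maximal curvature by Lemma \ref{maxcurv}. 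As an independent confirmation one may argue directly on the amoeba: for a simple Harnack curve ${\cal A}_{\R A}=\partial {\cal A}_A$, its $i$ bounded complementary components (where $i=g$ is the number of interior lattice points of $\Delta$) are convex, so their boundary ovals contribute total curvature $2\pi i$ to $\int_{{\cal A}_{\R A}}\vert k\vert$, while the outer contour with its $b$ tentacle ends (where $b=\sum d_i$ is the number of boundary lattice points) contributes $\pi(b-2)$; since $2i+b-2=2\,vol(\Delta)$ by Pick's formula, the sum equals $2\pi\,vol(\Delta)$, i.e. maximal curvature.
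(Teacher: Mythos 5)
Your preliminary reductions are correct and in fact follow the same road as the paper: everything is made to hinge, via Lemma \ref{maxcurv}, on total reality of the logarithmic Gauss map, and your reformulations are valid (total reality is equivalent to $F=\R A$ since $\R A\subset F$ holds automatically for real coefficients; an $M$-curve is dividing, so it suffices that $\gamma(A^+)$ avoid $\R P^1$, equivalently lie in one open hemisphere). The genuine gap is the one step that carries all the weight: the assertion that cyclically maximal position ``forces $\gamma_{\R}$ to be monotone along the boundary arcs $C_i$ and to realise the maximal total degree''. This does not follow formally from the definition of a simple Harnack curve: that definition is purely topological (how $\R \bar A$ meets the divisor $T_{\Delta}^{\infty}$ and in what cyclic order), whereas monotonicity of $\gamma_{\R}$ means absence of real logarithmic inflection points, a differential--geometric statement essentially equivalent to the total reality you are trying to prove. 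Nor can the degree of $\gamma$ on the component $C$ be read off from the cyclic order of the finitely many points of $C\cap T_{\Delta}^{\infty}$ and their images: a circle map passing through prescribed points in a prescribed cyclic order can have arbitrarily large degree (extra full turns are invisible to such data), so without monotonicity the ``degree count'' never gets started. This missing step is precisely the nontrivial content of Mikhalkin's theorem; the paper does not reprove it either, but takes it as known from \cite{M} (together with the companion fact, also from \cite{M}, that $L\vert_{\R A}$ is a bijection onto ${\cal A}_{\R A}$) and then concludes exactly as you do. In effect your proposal is the paper's proof with its one deep input replaced by an unproved assertion.

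The ``independent confirmation'' does not close the gap, because it presupposes facts of the same depth: that ${\cal A}_{\R A}=\partial {\cal A}_A$ for a simple Harnack curve, and that all $g$ bounded complementary components of the amoeba are actually present, are themselves results of \cite{M}, \cite{M-R} whose proofs are intertwined with total reality (what is elementary is only $\partial {\cal A}_A\subset L(F)$; a priori parts of $\R A$ could map into the interior of the amoeba). Granting those, your bookkeeping is correct --- convexity of complement components (which holds for every amoeba) gives $2\pi$ per oval and $\pi(s-2)$ for the outer contour, and Pick's formula gives $2\pi g+\pi(s-2)=2\pi\,vol(\Delta)$ --- so it is a pleasant sanity check, but as a proof it is circular at the same spot. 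One smaller slip: the relevant total degree onto $\R P^1$ is $2\,vol(\Delta)$, not $vol(\Delta)$; each embedded oval already contributes $2$ to the degree of the Gauss map into $\R P^1$, and the component meeting the divisor must contribute $s-2$, totalling $2g+s-2=2\,vol(\Delta)$. To complete the proposal, either cite \cite{M} for total reality, as the paper does, or actually carry out Mikhalkin's argument for the degree computation.
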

\begin{proof}
The logarithmic Gauss map  (restricted to the real part) $ \gamma_{\R}  : \; \R \bar A \rightarrow \R P^1$ is totally real (i.e., the cardinal of each fiber is $2 vol(\Delta)$)  (see Lemma \ref{maxcurv}). Therefore it is also true for the Gauss map $g $ :   ${\cal A}_{\R A} \rightarrow \R P^1$ ($\gamma_{\R} = g \circ L$) since the hypothesis implies that $L \vert \R A$ is a bijection onto ${\cal A}_{\R A}$; then the total curvature of  ${\cal A}_{\R A}$ is  vol(Im($g))= 2 \pi vol(\Delta)$.
\end{proof}

\begin{thm} \label{thm1} Let $A \subset (\C^*)^2$ be a smooth real algebraic curve. Assume that ${\cal A}_{\R A}$ has maximal curvature. Then $A$ is a Simple Harnack curve.
\end{thm}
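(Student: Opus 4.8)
The plan is to deduce from maximal curvature the total reality of the logarithmic Gauss map, and then to read off from it the combinatorial data defining a simple Harnack curve. By Lemma~\ref{maxcurv}, maximal curvature of ${\cal A}_{\R A}$ is equivalent to $\gamma$ being totally real, and by the first item of Remarks~\ref{maxtotcurv} this forces $F=\gamma^{-1}(\R P^1)=\R A$. Since the critical values $L(F)$ constitute the contour of the amoeba and contain its boundary $\partial{\cal A}_A$ (\cite{M}), the hypothesis says that this contour is exactly the real amoeba, ${\cal A}_{\R A}=L(F)\supseteq\partial{\cal A}_A$, while the real Gauss map $\gamma_{\R}\colon\R\bar A\to\R P^1$ is a branched covering whose generic fibre consists of $2\,{\rm vol}(\Delta)$ points, all real. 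The whole task is to convert this analytic information into the statements that $A$ is an $M$-curve in weak maximal position satisfying conditions~1, 2 and~3.

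First I would treat the behaviour along $T_{\Delta}^{\infty}$. At a point of $\bar A\cap l_i$ the tangent of the corresponding end of the amoeba is the primitive normal to the side $\tilde l_i$, so $\bar\gamma$ takes there a \emph{real} value; total reality $\bar\gamma^{-1}(\R P^1)=\R\bar A$ then forces each such intersection point to be real, i.e. $\bar A\cap\R l_i=\bar A\cap l_i$, which is weak maximal position. Representing ${\cal A}_{\R A}$ as a curve in $\overset{\circ}{\Delta}$ bounding the amoeba, as in the proof of Lemma~\ref{cond3}, the $d_i$ ends attached to a fixed side point in one direction and are carried by a single component $C$ of $\R\bar A$ (condition~1), while the $g$ interior lattice points of $\Delta$ match the $g$ bounded complementary regions of the amoeba, each bounded by one oval of $\R A$; together with $C$ this yields $g+1$ real components, so $A$ is an $M$-curve. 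For these last two points I would lean on the moment-map analysis of \cite{M}.

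With $A$ an $M$-curve in weak maximal position satisfying condition~1, Lemma~\ref{cond3} reduces the remaining conditions~2 and~3 to the \emph{smoothness} of ${\cal A}_{\R A}$. Now ${\cal A}_{\R A}$ is smooth precisely when $L|_{\R A}$ is injective: on each open quadrant $L$ is a diffeomorphism, so a singular point can only be a pinching point, where two real branches lying in different quadrants share the same image. Maximal curvature does give a fibrewise form of injectivity: comparing $\gamma_{\R}=g\circ L$ with the equalities ${\rm card}\,\gamma_{\R}^{-1}(x)={\rm card}\,g^{-1}(x)=2\,{\rm vol}(\Delta)$, valid for generic $x$ by total reality and by maximal curvature respectively, shows that $L$ identifies no two points of a common generic fibre of $\gamma_{\R}$, i.e. no two real branches sharing a common Gauss direction except possibly over finitely many directions.

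The main obstacle is precisely the upgrade from this to genuine injectivity. An \emph{isolated} pinching point leaves the total-curvature integral $\int_{{\cal A}_{\R A}}|k|={\rm vol}({\rm im}\,g)$ unchanged — near it the two tangent branches still contribute the full count to a generic fibre of $g$ — so the curvature bound \emph{by itself} cannot forbid such a point, and its exclusion must come from the global totally-real geometry rather than from the integral. The route I would pursue is to show directly, following the moment-map description of the contour in \cite{M}, that total reality makes ${\cal A}_{\R A}$ an embedded curve bounding the planar region ${\cal A}_A$ (a pinch would make two of the boundary arcs $C_i,C_j$ interleave, as in the absurdity argument proving Lemma~\ref{cond3}); an alternative would be to promote maximal curvature to maximal area and invoke the characterisation of \cite{M-R}. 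Once ${\cal A}_{\R A}$ is known to be embedded, hence smooth, Lemma~\ref{cond3} yields that $A$ is a simple Harnack curve, which in turn recovers the smoothness promised in Remarks~\ref{maxtotcurv}.
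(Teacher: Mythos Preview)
The principal gap is in your handling of smoothness. You assert that an isolated pinching point leaves $\int_{{\cal A}_{\R A}}|k|$ unchanged and hence cannot be excluded by maximal curvature alone; this is precisely where the paper's argument lives, and it is \emph{local}, not global. At a pinching point $P$ each of the two branches of ${\cal A}_{\R A}$ through $P$ carries an inflection (cf.\ \cite{M}). Lift one branch to an arc $C_1\subset\R A$ with $P_1\in C_1$, $L(P_1)=P$; the logarithmic inflection at $P_1$ means $\gamma_1:=\gamma|_{C_1}$ has a local extremum there, say a maximum with value $\alpha_1$. For $\theta\in(\alpha_1,\alpha_1+\varepsilon)$ the two points of $\gamma^{-1}(\theta)$ lying in $A$ near $P_1$ form a complex-conjugate pair off $\R A$. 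Since $\deg\gamma=2\,{\rm vol}(\Delta)$ is fixed, these non-real points cannot be compensated elsewhere: ${\rm card}\,\gamma_{\R}^{-1}(\theta)<2\,{\rm vol}(\Delta)$ on an open set of directions, hence ${\rm card}\,g^{-1}(\theta)<2\,{\rm vol}(\Delta)$ there, and the curvature is not maximal. So maximal curvature (equivalently, total reality, via Lemma~\ref{maxcurv}) \emph{does} forbid pinching points directly; neither the moment-map embeddedness argument nor the maximal-area characterisation of \cite{M-R} is needed.

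Apart from this, your route to the $M$-curve property and condition~1 differs from the paper's and is not self-contained. The paper first assumes ${\cal A}_{\R A}$ smooth (justified a posteriori by the argument above), so that ${\cal A}_{\R A}$ has no inflection points; this yields the elementary bound $\int|k|\le 2\pi p+\pi t$, where $p$ is the number of compact components of ${\cal A}_{\R A}$ and $t={\rm card}(\R\bar A\cap T_\Delta^\infty)$. Combining with Pick's formula $2\,{\rm vol}(\Delta)=2g+s-2$ and an angle-count over the sides of $\Delta$ (the total curvature of an arc joining $\tilde l_i$ to $\tilde l_j$ equals the exterior angle $\alpha_{ij}$, and $\sum\alpha_{ij}\le(n-2)\pi$) then forces first $t=s$ and then $p=g$; this is what gives weak maximal position, the $M$-curve property, and condition~1 simultaneously. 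Your derivation of weak maximal position from total reality of $\bar\gamma$ is clean, but the assertions that the $g$ interior lattice points of $\Delta$ necessarily index $g$ compact ovals and that all ends on a given side lie on a single component are not consequences of \cite{M} alone and need an argument of the type the paper supplies.
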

\begin{proof}
Let us first prove the theorem under the supplementary assumption that  the real amoeba ${\cal A}_{\R A}$ is smooth.\\
Let $p$ be the number of compact connected  components of ${\cal A}_{\R A}$. Let  $t$ be the cardinal of $\R \bar A \cap T_{\Delta}^{\infty}$, $s = \sum d_i = {\rm card}(\partial \Delta \cap \Z^2)$. \\
Set $g = {\rm card}( \overset{\circ}{\Delta} \cap \Z^2)$; $g$ is the genus of the curve $\bar A$.\\

a)  By  Remarks \ref{maxtotcurv} and the smoothness assumption, the real Amoeba has no inflection point.  Let us prove that since ${\cal A}_{\R A}$ is smooth and has no inflection  points, one has 
\begin{equation} \label{totcurv}
 \int_{{\cal A}_{\R A}} \vert k \vert    \leq 2\pi p + \pi t 
\end{equation}

There are exactly $t$  arcs  of ${\cal A}_{\R A}$ which are not compact, each one of total curvature $\leq \pi$ (since it has no inflection point), which proves (\ref{totcurv}) since each compact component have total curvature equal to $2 \pi$.\\

b) Recall  Pick's formula :
\begin{equation} \label{Pick}
 vol(\Delta) = g + s/2 - 1 
\end{equation}

The hypothesis of maximal curvature and Pick's formula implie :

\[ \int_{{\cal A}_{\R A}} \vert k \vert = 2 \pi vol(\Delta)) = 2 \pi g + \pi s - 2\pi \]
wich gives, applying (\ref{totcurv}):
\begin{equation} \label{ineg}
2\pi g + \pi s - 2\pi \leq 2\pi p + \pi t.
\end{equation}

  Let us prove that  $A$ is a M-curve in weak maximal position. We have necesseraly $t > 0$ because otherwise $s \geq 6$, because the integer length of each side of $\Delta$ would then be even, which is not possible by (\ref{ineg}) since $p \leq g+1$.\\
We have therefore $p \leq g$.\\
\par  (a)   Let us first prove that $t = s$ ($t \leq s$ by definition). If we assume by absurd that $t \leq s-1$, we get from (\ref{ineg}) that $2 \pi g \leq 2\pi p + \pi$, or $2g \leq 2p + 1$ which implies $g = p$.\\
As above, using the moment map, we can see topologically the Amoeba ${\cal A}_{\R A}$ as a subset of $\overset{\circ}{\Delta} $. \\
Let us denote by $c_{ij} \; (i \not = j)$ an arc of ${\cal A}_{\R A}$ (if it exists) joining $\tilde l_i$ to $\tilde l_j$; if $\alpha_{ij}$ denotes the angle between $\tilde l_i$ and $\tilde l_j$, then the total curvature of $c_{ij}$ is $\alpha_{ij}$.\\
Since $g = p$, there exists only one connected component $C \subset \R \bar A$ such that $C \cap T_{\Delta}^{\infty} \not = \emptyset$. Therefore if $C$ intersect $k$ components $ l_r$ ($k \leq n$) of the divisor $T_{\Delta}^{\infty}$, there exist  $k$ arcs $c_{r r'}$
of  ${\cal A}_{\R A}$ such that each side $\tilde l_r$ intersect two of them. Since the sum of the angles between two consecutive sides of $\Delta$ is $(n-2) \pi$, we have $\sum \alpha_{ij} \leq (n-2) \pi$. Now there is at most $t-n$ other non compact arcs in ${\cal A}_{\R A}$, each one of total curvature $\leq \pi$ (since they have no inflection point), 
which gives 
\[ 2\pi g + \pi s - 2\pi = \int_{{\cal A}_{\R A}} \vert k \vert \leq 2\pi g+ \pi(t-n) + \pi(n-2) = 2\pi g + \pi t - 2 \pi \]
or $s = t$, contrary to the hypothesis.
\par (b) The relation (\ref{ineg}) implies now that $2\pi g + \pi s - 2 \pi \leq 2 \pi p + \pi s$ (since $t = s$) which implies $g-1 \leq p \leq g$. But since $p \geq g-1$, there is at least one arc $c$ of $\R \bar A$ which intersects two different components $l_i$ and $l_j$ of $T_{\Delta}^{\infty}$; therefore the total curvature of the corresponding arc of ${\cal A}_{\R A}$ is
$< \pi$. We have then $2 \pi g + \pi s - 2 \pi < 2 \pi p + \pi s$, which implies that necesseraly $p = g$.\\
We have then that $A$ is a M-curve in weak maximal position satisfying conditon 1.\\
Now $A$ is a simple Harnack curve by Lemma \ref{cond3}, since we have assumed ${\cal A}_{\R A}$ smooth.
\end{proof}


\section{On the smoothness of the real Amoeba}

To achieve the proof of Theorem \ref{thm1}, We must prove that under the hypothesis of maximal curvature, the real amoeba ${\cal A}_{\R A}$ is smooth.\\ Since $A$ is smooth and that ${\cal A}_{\R A}$ does not have smooth inflection point (Remark \ref{maxtotcurv}), the only {\it a priori} possible inflection points (or singular points) for ${\cal A}_{\R A}$ are  pinching points (see \cite{M}).  \\At a pinching point $P$ there are two smooth branches of ${\cal A}_{\R A}$ crossing at $P$, each one with an inflection point at $P$ (see \cite{M}). Let $\alpha$ ($0 \leq \alpha \leq \pi/2$) be the angle  of these two branches at $P$.
\begin{lem} \label{mxcurv} Assume that ${\cal A}_{\R A}$ has a pinching point $P$. Then the real amoeba ${\cal A}_{\R A}$ does not has maximal curvature (i.e., there exits $\epsilon  >  0$ such that 
\[  \int_{{\cal A}_{\R A}} \vert k \vert \leq   2 \pi g + \pi (s - 2) - \epsilon.) \]
\end{lem}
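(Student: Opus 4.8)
The plan is to realise the total curvature as the integral of the multiplicity of the Gauss map and then to exhibit a set of directions of positive measure over which this multiplicity drops strictly below its maximal value $2\,vol(\Delta)$. Writing $m(\theta) = {\rm card}(g^{-1}(\theta))$ for $\theta \in \R P^1$, the discussion preceding (\ref{1}) gives
\[ \int_{{\cal A}_{\R A}} \vert k \vert = \int_{\R P^1} m(\theta)\, d\theta, \qquad m(\theta) \le 2\,vol(\Delta), \]
the bound coming from ${\rm card}(g^{-1}(\theta)) \le {\rm card}(\gamma^{-1}(\theta)) \le 2\,vol(\Delta)$. Since $m$ is integer valued, it suffices to produce an open arc $U \subset \R P^1$ over which $m(\theta) < 2\,vol(\Delta)$; then $\int \vert k\vert \le 2\pi\,vol(\Delta) - \int_U(2\,vol(\Delta)-m) \le 2\pi\,vol(\Delta) - \vert U\vert$, and by Pick's formula (\ref{Pick}) the right hand side is exactly $2\pi g + \pi(s-2) - \epsilon$ with $\epsilon \ge \vert U\vert > 0$.

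The source of such a deficit is the pair of inflections created by $P$. Let $a_1, a_2 \in \R A$ be the two preimages of $P$ under $L$, lying in two quadrants of $(\R^*)^2$. Since $L$ is a local diffeomorphism on each quadrant and each of the two branches of ${\cal A}_{\R A}$ through $P$ has an inflection at $P$, the real logarithmic Gauss map $\gamma_{\R} = g\circ L$ has a fold at each $a_i$, with critical values $\beta_1,\beta_2 \in \R P^1$ equal to the tangent directions of the two branches, so that the angle between $\beta_1$ and $\beta_2$ is $\alpha$. Near such a fold the corresponding sheet of $g$ is present on only one side of $\beta_i$, and for $\theta$ on the other side the two local preimages have merged and disappeared.

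The heart of the argument is to turn these two one–sided folds into a genuine, non–local deficit. Using the local model of two inflectional branches meeting at angle $\alpha$ (namely $\{(t,t^3)\}$ and its rotate by $\alpha$), one checks that the folds at $\beta_1$ and $\beta_2$ are oriented so as to leave an angular sector adjacent to the pair $\{\beta_1,\beta_2\}$ uncovered by the two sheets issued from $a_1,a_2$, while on the complementary side these two sheets merely overlap. Because overlap is wasted against the cap $m \le 2\,vol(\Delta)$, the effect is that over an arc $U$ of directions the total multiplicity cannot reach $2\,vol(\Delta)$: the two missing sheets are not restored by any other branch of ${\cal A}_{\R A}$. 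Here one invokes the maximal–curvature hypothesis itself — equivalently, that $\gamma$ is totally real and $F = \R A$ (Remarks \ref{maxtotcurv}) — to control all remaining sheets and to guarantee that no compensating fold occurs over $U$. The length $\vert U\vert$, and hence $\epsilon$, is then bounded below by an explicit positive function of $\alpha$ on the range $0 \le \alpha \le \pi/2$.

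The main obstacle is precisely this non–compensation step: a priori a fold of one sheet over $U$ could be cancelled by an oppositely oriented fold of another sheet over the same directions, so one must show that the global configuration forced by maximal curvature forbids this over the sector $U$. I expect this to follow from the moment–map picture already used in Lemma \ref{cond3} and in the proof of Theorem \ref{thm1}: viewing ${\cal A}_{\R A}$ inside $\overset{\circ}{\Delta}$, the inflectional crossing at $P$ obstructs the arcs from meeting the sides of $\Delta$ with full multiplicity, which is exactly the failure of maximal curvature. Once this is in place, the verification that the resulting $\epsilon$ is strictly positive throughout $0\le\alpha\le\pi/2$ is the routine local computation with the cubic model.
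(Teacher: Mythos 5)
Your proposal correctly sets up the counting framework (total curvature as the integral of the fiber count $m(\theta)$ of $g$, capped by $2\,vol(\Delta)$) and correctly locates the source of the deficit in the two inflectional branches at $P$, but it has a genuine gap exactly where you say it does: the ``non-compensation'' step. As written, you only \emph{expect} that no other sheet of ${\cal A}_{\R A}$ can restore the multiplicity over the uncovered sector $U$, and you defer this to an unspecified moment-map argument. Your instinct that this is the main obstacle is right: working purely with real sheets of $g$, nothing prevents some distant branch of the amoeba from having tangent directions sweeping through $U$, so the real-geometric picture alone does not produce the deficit, and the circularity risk in invoking the maximal-curvature hypothesis ``to control all remaining sheets'' is never resolved into an actual argument.

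The paper closes precisely this gap by passing to the complexification instead of staying with real sheets. Since the branch $C_1$ of $\R A$ through $P_1 \in L^{-1}(P)$ has a logarithmic inflection, the real logarithmic Gauss map restricted to $C_1$ has a local extremum at $P_1$; hence for $\theta$ in an open interval on one side of the critical value $\alpha_1 = \gamma_1(P_1)$, the two real preimages that merge at $P_1$ become a pair of non-real complex conjugate points of $\gamma^{-1}(\theta)$ lying in $L^{-1}(B_P)$. This is what kills compensation: the fiber of $\bar \gamma$ over a generic $\theta$ has exactly $2\,vol(\Delta)$ points in all, so two non-real points force ${\rm card}(\gamma^{-1}(\theta) \cap \R A) \leq 2\,vol(\Delta) - 2$ on that interval --- no other branch can supply extra real preimages without exceeding the complex degree. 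In other words, $\gamma$ fails to be totally real, and Lemma \ref{maxcurv} (whose proof is exactly the integral-geometric estimate you set up in your first paragraph) immediately yields the strict deficit $\epsilon$. Your argument becomes complete if you replace the moment-map hope by this complex fiber count; the explicit cubic model and the lower bound for $\epsilon$ as a function of $\alpha$ are then unnecessary.
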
 
\begin{proof} let $B_P \subset \R^2$ be a ball centered at $P$ small enough such that it contains no other singular point of ${\cal A}_{\R A}$ than the point $P$.  Since $P$ is a pinching point and the curve $A$ is smooth, we have that $L^{-1}(P) = \{ P_1, P_2 \} $ where $P_i \in \R A$ are two points in different quadrants of $(\R^*)^2$. Then $L^{-1}({\cal A}_{\R A} \cap B_P)$ has two connected components $C_1$ and $C_2$ (in different quadrants of $(\R^*)^2$) such that $P_1 \in C_1$ and $P_2 \in C_2$. Let $\gamma_1$ be the logarithmic Gauss map restricted to $C_1$ : since $C_1$ has by hypothesis  a logarithmic inflection point at $P_1$, the map $\gamma_1$ has a local extremum at $P_1$ (in a chart of  $\R P^1$). If $\alpha_1 = \gamma_1(P_1)$, ($\alpha_1 \in ] - \pi/2, \pi/2 [$,  and if $\alpha_1$ is for instance a local maximum at $P_1$ for $\gamma_1$, then there exists $\varepsilon > 0$ such that for $\theta \in ] \alpha_1, \alpha_1 + \varepsilon [, \; \gamma_1^{-1}(\theta)$ has two non real (conjugate) points in $L^{-1}(B_P)$. \\
Then the logarithmic Gauss map $\gamma :  A \rightarrow \C P^1$ is not totally real, and the amoeba ${\cal A}_{\R A}$ does not have maximal curvature (Lemma \ref{maxcurv}).
\end{proof}
\begin{rem}
\end{rem}
If the real amoeba ${\cal A}_{\R A}$ has not maximal curvature, it may have a pinching point $P$.\\
However, if  $\alpha$ is the angle between the two branches of ${\cal A}_{\R A}$ crossing at $P$, it is possible to prove that the angle $\alpha$ is necesserally $> 0$.

\end{document}